\title[Path concordances as detectors of codimension-one manifold factors]
{Path concordances as detectors of codimension-one\\manifold factors}
\author{Robert J Daverman}
\address{Department of Mathematics\\University of
Tennessee\\Knoxville\\\newline
Tennessee 37996-1300\\USA}
\email{daverman@math.utk.edu}
\urladdr{}
\author{Denise Halverson}
\address{Department of Mathematics\\Brigham Young
University\\\newline
Provo\\Utah\\84602\\USA}
\email{deniseh@math.byu.edu}
\urladdr{}
\def\cnewtheorem#1[#2]#3{\newtheorem{#1}{#3}[section]
\expandafter\let\csname c@#1\endcsname\c@thm}
\newtheorem{thm}{Theorem}[section]
\theoremstyle{definition}
\newtheorem*{defn}{Definition}
\newcommand{\im}{\operatorname{im}}
\newcommand{\diam}{\operatorname{diam}}
\newcommand{\origin}{\operatorname{origin}}
\newcommand{\point}{\operatorname{point}}
\newcommand{\proj}{\operatorname{proj}}
\newcommand{\Map}{\operatorname{Map}}
\begin{document}

\begin{asciiabstract}
We present a new property, the Disjoint Path Concordances Property, of
an ENR homology manifold X which precisely characterizes when X times
R has the Disjoint Disks Property.  As a consequence, X times R is a
manifold if and only if X is resolvable and it possesses this
Disjoint Path Concordances Property.
\end{asciiabstract}

\begin{htmlabstract}
We present a new property, the Disjoint Path Concordances Property,
of an ENR homology manifold X which precisely characterizes when
X&times;<b>R</b> has the Disjoint Disks Property.  As a consequence,
X&times;<b>R</b> is a manifold if and only if X
is resolvable and it possesses this Disjoint Path Concordances Property.
\end{htmlabstract}

\begin{abstract} 
We present a new property, the Disjoint Path Concordances Property,
of an ENR homology manifold $X$ which precisely characterizes when
$X \times \mathbb{R}$ has the Disjoint Disks Property.  As a consequence,
$X \times \mathbb{R}$ is a manifold if and only if $X$
is resolvable and it possesses this Disjoint Path Concordances Property.
\end{abstract}

\maketitle


\section{Introduction}

Back in the 1950s R\,H Bing showed \cite{bing1,bing2} that his nonmanifold
``dogbone space'' is a Cartesian factor of Euclidean 4--space.  Since
then, topologists have sought to understand which spaces are factors of
manifolds. It is now known (due to the work of Edwards \cite{edw} and
Quinn \cite{Q}) that the manifold factors coincide with those
ENR homology manifolds which admit a cell-like resolution by a manifold;
equivalently, they are the ENR homology manifolds of trivial Quinn
index.  In particular, if $X$ has trivial Quinn index then
$X \times \mathbb{R}^k$ is a manifold for $k \geq 2$. Whether $X \times
\mathbb{R}$ itself is necessarily a manifold stands as a fundamental
unsettled question.

Several properties of a manifold factor $X$ of dimension $n$ assure that
its product with $\mathbb{R}$ is a manifold.  Among them are:
\begin{enumerate}
\item The singular (or nonmanifold) subset $S(X)$ of $X$ -- namely,
the complement of the maximal $n$--manifold contained in $X$ -- has
dimension at most $n{-}2$, where $n \ge 4$ (see the article by Cannon
\cite[Theorem 10.1]{C}).\label{prop1}
\item There exists a cell-like map $f\co M \to X$ defined on an
$n$--manifold such that $\dim \{x \in X : f^{-1}(x) \ne \point\} \le n-3$
(see the article by Daverman \cite[Theorem 3.3]{da81}).\label{prop2}
\item There exists a topologically embedded $(n{-}1)$--complex or an ENR
homology $(n{-}1)$--manifold $K$ with $S(X) \subset K \subset X$
(see the book by Daverman \cite[Corollaries
26.12A--12B]{dabook}).\label{prop3}
\item $X$ arises from a nested defining sequence, as defined by Cannon
and Daverman \cite{cd} \cite[Chapter 34]{dabook}, for the decomposition
into point inverses induced by a  cell-like map $f\co M \to X$.\label{prop4}
\item $X$ has the Disjoint Arc-Disk Property of
\cite[page 193]{dabook}.\label{prop5}
\end{enumerate}
Condition (\ref{prop5}) is implied by either (\ref{prop1}) or
(\ref{prop2}) but not by (\ref{prop3}) or (\ref{prop4}).

More pertinent to issues addressed in this manuscript,
Halverson \cite[Theorem 3.4]{dh02} proved that if an ENR homology $n$--manifold
$X$, $n \geq 4$, has a certain Disjoint Homotopies Property,
defined in the next section and abbreviated as DHP, then
$X \times \mathbb{R}$ has the more familiar Disjoint Disks Property,
henceforth abbreviated as DDP.
Because the DDP characterizes resolvable ENR homology
manifolds of dimension $n \geq 5$ as manifolds \cite{edw} \cite[Theorem
24.3]{dabook}, it
follows that $X \times \mathbb{R}$ is a genuine manifold if
$X$ is resolvable and has this DHP. Still
unknown are both whether all such ENR homology manifolds have DHP
and whether $X$ having said DHP is a necessary condition for $X \times
\mathbb{R}$ to be a manifold.

Since an ENR homology $n$--manifold $X, n \ge 4,$ has DHP if it
satisfies any of the properties mentioned in the second paragraph
except (\ref{prop3}), the sort of homology manifolds that might fail
to have it are the ghastly examples of Daverman and Walsh \cite{dw}.
Halverson \cite{dh03} has constructed some related ghastly examples that
do have DHP.

In hopes of better understanding
the special codimension one manifold factors, this paper builds on Halverson's
earlier work to present a necessary and sufficient condition,
the Disjoint Path Concordances Property defined at the outset of Section 2,
for $X \times
\mathbb{R}$ to be a manifold (again provided $\dim X \geq 4$).

\section{Preliminaries}

Throughout what follows both $D$ and $I$ stand for the unit interval,
[0,1]. A metric space $X$ is said to have the \emph{Disjoint
Homotopies Property} if any pair of path homotopies $f_1,f_2\co D \times
I \to X$ can be approximated, arbitrarily closely, by
homotopies $g_1,g_2\co D\times I \to X$ such that
$$g_1(D \times t) \cap g_2(D \times t) = \emptyset, \quad \text{for all } t \in
I.$$

\begin{defn}
A \emph{path concordance} in a space $X$ is a map $F\co D \times I \to
X \times I$ such that $F(D \times e) \subset X \times e, e \in
\{0,1\}.$
\end{defn}

Let $\proj_X\co X \times I \to X$ denote projection.

\begin{defn}
A metric space $(X,\rho)$ satisfies the \emph{Disjoint Path
Concordances Property (DCP)} if, for any two path homotopies
$f_1,f_2\co D \times I \to X$ and any $\epsilon > 0$, there
exist path concordances $F_1,F_2\co  D \times I \to X \times I$ such that
$$F_1(D \times I) \cap F_2(D \times I) = \emptyset$$
and $\rho (f_i, \proj_X F_i) < \epsilon$.
\end{defn}

A \emph{homology $n$--manifold} $X$ is a locally compact metric space such that
$$H_*(X,X-\{x\}; \mathbb{Z}) \cong H_*(\mathbb{R}^n,\mathbb{R}^n -
\{\origin\}; \mathbb{Z})$$
for all $x \in X$.
An \emph{ENR homology $n$--manifold} $X$ is an homology $n$--manifold
which is homeomorphic to a retract of an open subset of some Euclidean
space (ENR is the abbreviation for ``Euclidean neighborhood retract'');
equivalently, $X$ is a homology $n$--manifold which is both
finite-dimensional and locally contractible.

A homology $n$--manifold $X$ is \emph{resolvable} provided there
exists a surjective, cell-like mapping $f\co M \to X$ defined on an
$n$--manifold $M$. Quinn \cite{Q} has shown that (connected) ENR
homology $n$--manifolds $X$, for $n \geq 4$, are resolvable if and only
if a certain index $i(X) \in 1 + 8\mathbb{Z}$ equals 1.

A metric space $X$ has the \emph{Disjoint Arcs Property (DAP)} if every pair
$f_1,f_2\co D \to X$ of maps can be approximated, arbitrarily closely,
by maps $g_1,g_2\co D \to X$ for which $g_1(D) \cap g_2(D) = \emptyset$.
All ENR homology $n$--manifolds, for $n \ge 3$, have the DAP.

The following Homotopy Extension Theorem is fairly standard. We state it
here because it will be applied several times in our arguments.

\begin{thm}[Controlled Homotopy Extension Theorem (CHET)]\label{HET}
Suppose $X$ is a metric ANR, $C$ is a compact subset of $X$,
$j\co C \to X$ is the inclusion map, and
$\epsilon > 0$. Then there exists $\delta > 0$ such that for
each map $f\co Y \to C$ defined on a normal space $Y$,
each closed subset $Z$ of $Y$,  and each map
$g_Z\co Z \to X$ which is $\delta$--close to  $jf|_Z$,
$g_Z$ extends to a map $g\co Y
\to X$ which is $\epsilon$--homotopic to $jf$. In particular,
for any open set $U$ for which $Z \subset U \subset Y$, there is a
homotopy $H\co Y \times I \to X$ such that
\bgroup\begin{enumerate}
    \item $H_0 = jf$ and $H_1 = g$
    \item $g|_Z = g_Z$
    \item $H_t|_{Y-U} = jf|_{Y-U}$ for all $t \in I$
    \item $\diam(H(y \times I)) < \epsilon$ for all $y \in Y$.
\end{enumerate}\egroup
\end{thm}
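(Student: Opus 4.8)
The plan is to reduce the whole statement to a single explicit construction built from the linear structure of an ambient space and a neighborhood retraction, with the constant $\delta$ read off from the compactness of $C$. By a standard embedding theorem I would first regard $X$ as an isometrically embedded closed subspace of a normed linear space $L$; since $X$ is a metric ANR it is then a neighborhood retract of $L$, so fix an open set $W \supseteq X$ in $L$ and a retraction $r\co W \to X$. Now exploit compactness: for each $c \in C$ choose $\eta_c > 0$ with $B(c, 2\eta_c) \subseteq W$ and $\diam r(B(c, 2\eta_c)) < \epsilon$, extract a finite subcover of $C$ by the balls $B(c_i, \eta_{c_i})$, and put $\delta = \min_i \eta_{c_i}$. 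Then $B(c, \delta) \subseteq W$ and $\diam r(B(c, \delta)) < \epsilon$ for every $c \in C$; in particular the straight segment in $L$ from a point of $C$ to any point within $\delta$ of it stays in $W$ and is carried by $r$ onto a set of diameter $< \epsilon$. Crucially $\delta$ depends only on $X$, $C$ and $\epsilon$, not on $Y$, $Z$, $f$ or $g_Z$, and securing exactly this uniformity is what compactness of $C$ buys.

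Given $f\co Y \to C$, a closed set $Z \subseteq Y$, and $g_Z\co Z \to X$ with $\rho(g_Z, jf|_Z) < \delta$, the error map $e_0\co Z \to L$, $e_0(z) = g_Z(z) - f(z)$, has image in the $\delta$--ball about $0$. Extend it by the Dugundji extension theorem to $e\co Y \to L$ with image in the convex hull of $e_0(Z)$, hence still in that $\delta$--ball, and choose (by normality) an Urysohn function $\lambda\co Y \to [0,1]$ with $\lambda \equiv 1$ on $Z$ and $\lambda \equiv 0$ on $Y - U$. Define
$$H(y,t) = r\bigl(f(y) + t\lambda(y)e(y)\bigr), \qquad g = H_{1}.$$
The argument of $r$ always lies within $\delta$ of $f(y) \in C$, so $H$ and $g$ are well defined and map into $X$; then $H_0 = r\circ f = jf$ and $H_1 = g$; on $Z$ one has $\lambda \equiv 1$ and $f(z) + e(z) = g_Z(z) \in X$, so $g|_Z = r\circ g_Z = g_Z$; on $Y - U$ one has $\lambda \equiv 0$, so $H_t = jf$ there for all $t$; and each track $H(\{y\}\times I)$ is the $r$--image of a segment issuing from $f(y) \in C$ of length $< \delta$, hence of diameter $< \epsilon$. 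This gives properties (1)--(4), and with them the preceding assertion that $g$ extends $g_Z$ and is $\epsilon$--homotopic to $jf$.

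The genuinely delicate point is the Dugundji step when $Y$ is merely normal rather than metrizable, since $L$ may be infinite dimensional and extension into $L$ is then not automatic. I would get around this by moving the problem into finite dimensions first: using local contractibility of the ANR $X$, replace a small neighborhood $V$ of $C$ in $X$ by the nerve $P$ of a sufficiently fine finite open cover of $C$, equipped with a canonical map $\kappa\co V \to P$ and a realization $\ell\co P \to X$ with $\ell\kappa$ as close to the inclusion $V \hookrightarrow X$ as desired and with $\ell(P)$ inside a prescribed neighborhood of $C$. Since $P$ sits in some $\mathbb{R}^{N}$, an absolute extensor for normal spaces, the error-extension above can be performed there by ordinary Tietze extension and carried back into $X$ by $\ell$, the resulting small discrepancies being absorbed into the $\epsilon$--budget by means of the homotopy of the previous paragraph. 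I expect this reduction, rather than any of the estimates, to be the main obstacle; once the target near $C$ is finite dimensional and $\delta$ has been fixed uniformly from the compact set $C$, what remains is the routine bookkeeping sketched above.
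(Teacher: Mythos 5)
The paper offers no proof of this theorem---it is stated as ``fairly standard'' background---so your attempt can only be measured against the usual argument, and against that your first two paragraphs are exactly right: embed $X$ as a closed subset of a normed space $L$ (Arens--Eells), fix a neighborhood retraction $r\colon W\to X$, extract the uniform $\delta$ from the compactness of $C$ by a Lebesgue-number argument, extend the error map $e_0=g_Z-jf|_Z$ keeping its values in the (convex) $\delta$--ball, and set $H(y,t)=r\bigl(f(y)+t\lambda(y)e(y)\bigr)$. All four listed properties check out, and this is a complete proof whenever the extension of $e_0$ is available---in particular for metrizable $Y$, which covers every invocation of CHET in this paper (the domains that actually occur are $D\times J_k$, $\sigma_i\times I$, $(\sigma_i\times I)\cup(K_i\times\partial I)$, and $K_i\times I$, all compact metric).

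The difficulty you flag for merely normal $Y$ is genuine, and your instinct to route the extension through a finite nerve $P\subseteq\mathbb{R}^N$ is the right one (finite polyhedra are absolute neighborhood extensors for normal spaces, whereas an infinite-dimensional $L$ need not be). But the patch as sketched does not close. Once the inclusion of a neighborhood of $C$ is replaced by $\ell\kappa$, the two ends of the transplanted homotopy become $\ell\kappa f$ and $\ell\kappa g_Z$ rather than $jf$ and $g_Z$, and condition (2) demands $g|_Z=g_Z$ exactly, not merely up to a small homotopy. Repairing the $t=1$ end means extending the correcting homotopy $(z,s)\mapsto\Theta(g_Z(z),1-s)$ (where $\Theta$ is the small homotopy from the inclusion to $\ell\kappa$) from $Z\times I$ over $Y\times I$ into $X$---which is again an extension problem into a possibly infinite-dimensional ANR over a possibly non-metrizable domain, i.e.\ the very obstacle you set out to avoid; note also that $Y\times I$ need not be normal when $Y$ is. So ``absorbed into the $\epsilon$--budget by means of the homotopy of the previous paragraph'' is not yet an argument. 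Either carry out the end-gluing explicitly inside $L$ using only data that factors through $\mathbb{R}^N$ (so that coordinatewise Tietze suffices and the exact identity $g|_Z=g_Z$ survives), or---entirely adequate for this paper---prove the statement for metrizable $Y$, where your first two paragraphs already do the whole job.
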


\section{Main results}

In this section we will demonstrate that DCP characterizes
codimension-one manifold factors among ENR homology $n$--manifolds, for
$n \ge 4$,  of trivial Quinn index.
Essentially the DCP condition requires that any pair of level preserving path
homotopies into $X\times I$ can be ``approximated'' by disjoint
path concordances, where the approximation is measured in the $X$
factor. The following crucial proposition demonstrates that the DCP
condition implies that any pair of level preserving path
homotopies can be approximated, as measured in $X \times I$,
by disjoint path concordances.

\begin{prop}\label{prop-two}
Suppose that $(X, \rho)$ is a metric ANR with DAP.  Then $X$ has DCP if and only
if given any pair of level preserving maps $f_1,f_2\co D \times I \to X
\times I$ and $\epsilon > 0$ there are maps $g_1,g_2\co D
\times I \to X \times I$ such that
\begin{enumerate}
\item $f_i$ and $g_i$ are $\epsilon$--close in $X \times I$,
\item $g_i|_{D \times \partial I}$ is level preserving, and
\item $g_1(D \times I) \cap g_2(D \times I) = \emptyset$.
\end{enumerate}
Moreover, if $f_1(D \times \partial I) \cap f_2(D \times \partial
I) = \emptyset$, then we also may require that $g_i|_{D \times \partial
I} = f_i|_{D \times \partial I}$.
\end{prop}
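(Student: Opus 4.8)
The plan is to prove the two directions separately, with the reverse implication being essentially trivial and the forward implication carrying all the weight. For the reverse direction, suppose the "strong" conclusion holds. Given path homotopies $f_1,f_2\co D\times I\to X$ and $\epsilon>0$, apply the hypothesis to the level preserving maps $\hat f_i\co D\times I\to X\times I$ defined by $\hat f_i(d,t)=(f_i(d,t),t)$ to get $g_1,g_2$ with disjoint images, $\epsilon$--close to $\hat f_i$, and level preserving on $D\times\partial I$. The maps $g_i$ are then path concordances (after composing the boundary behavior appropriately — since $g_i|_{D\times\{e\}}$ lands in $X\times\{e\}$), and $\proj_X g_i$ is $\epsilon$--close to $f_i$, so DCP holds. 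The only mild care needed is checking that level preservation on the boundary is exactly the path concordance condition $F(D\times e)\subset X\times e$, which is immediate.

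The forward direction is the substance. Assume $X$ has DCP and we are given level preserving $f_1,f_2\co D\times I\to X\times I$ and $\epsilon>0$. Write $f_i=(f_i^X,\mathrm{id})$ where $f_i^X\co D\times I\to X$; these $f_i^X$ are path homotopies in $X$ in the sense that $f_i^X|_{D\times\partial I}$ records the endpoint paths. First I would handle the boundary: by the DAP (applied in $X$, or by a relative argument) we can adjust $f_1,f_2$ slightly so that their restrictions to $D\times\partial I$ have disjoint images while remaining level preserving there — and if they were already disjoint on $D\times\partial I$ we skip this and keep the boundary fixed, which is what yields the "moreover" clause. Next, apply DCP to the pair of path homotopies $f_1^X,f_2^X\co D\times I\to X$ with a small tolerance $\epsilon'\ll\epsilon$ to obtain \emph{disjoint} path concordances $G_1,G_2\co D\times I\to X\times I$ with $\rho(f_i^X,\proj_X G_i)<\epsilon'$. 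The trouble is that $G_i$ need not be level preserving on $D\times\partial I$ and need not agree with $f_i$ there; all we control is that $G_i(D\times e)\subset X\times e$ and that the $X$--projection is close to $f_i^X$.

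The heart of the argument is to repair the $I$--coordinate of $G_i$ without destroying disjointness. Since $\proj_X G_i$ is $\epsilon'$--close to $f_i^X$, and $f_i$ has prescribed (level preserving, now disjoint) boundary behavior, I would use the Controlled Homotopy Extension Theorem \ref{HET}: take $C$ a compact ANR neighborhood data for $X\times I$, let $Y=D\times I$ and $Z=D\times\partial I$, and push the map $G_i$ by a small controlled homotopy (of size governed by $\epsilon'$ via CHET, so the total motion stays under $\epsilon$) so that on $Z$ it agrees with $f_i|_Z$. Because the homotopies have diameter $<\epsilon'$ and $G_1,G_2$ had disjoint \emph{compact} images — hence images separated by some $\eta>0$ — choosing $\epsilon'<\eta/3$ keeps the adjusted maps $g_1,g_2$ disjoint. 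Finally one checks $g_i|_{D\times\partial I}=f_i|_{D\times\partial I}$ is level preserving by construction, and $g_i$ is $\epsilon$--close to $f_i$ in $X\times I$ by combining the $\epsilon'$ bound from DCP with the CHET homotopy bound. I expect the main obstacle to be bookkeeping the constants so that the two successive approximations (the DCP approximation in the $X$ factor, then the CHET adjustment fixing the boundary in $X\times I$) compose to something $\epsilon$--small while the separation $\eta$ between $G_1(D\times I)$ and $G_2(D\times I)$ is large enough to survive both moves; a secondary subtlety is arranging the initial boundary adjustment by the DAP so that it is level preserving on $D\times\partial I$, which requires doing the DAP approximation slice by slice over $\partial I=\{0,1\}$ inside $X$ rather than in $X\times I$.
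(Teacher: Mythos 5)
Your reverse direction is fine and matches the paper's (one-line) treatment. The forward direction, however, has a genuine gap, and it sits exactly at the point you flag as ``the heart of the argument.'' A single application of DCP to $f_1^X,f_2^X$ over all of $D\times I$ produces concordances $G_1,G_2$ whose closeness to $f_i$ is controlled \emph{only in the $X$ factor}: for $t$ in the interior of $I$, the $I$--coordinate of $G_i(d,t)$ is completely uncontrolled (it need only lie in $I$, with the endpoints pinned to $X\times\{0\}$ and $X\times\{1\}$). Your CHET step repairs the boundary $Z=D\times\partial I$ only; it does nothing to the $I$--coordinate over the interior. So the resulting $g_i$ can fail condition (1) --- $\epsilon$--closeness to $f_i$ in $X\times I$ --- by as much as the full length of $I$ in the second coordinate, no matter how small you take $\epsilon'$. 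Naming the problem (``repair the $I$--coordinate'') is not the same as solving it, and the tool you reach for does not solve it.

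The paper's proof supplies the missing idea: subdivide $I$ into consecutive intervals $J_k=[t_{k-1},t_k]$ of length less than $\epsilon/2$, first use DAP and CHET to make $f_1(D\times t_k)\cap f_2(D\times t_k)=\emptyset$ at every partition level, then apply DCP separately on each slab $D\times J_k\to X\times J_k$. Because each concordance now lives over an interval of length under $\epsilon/2$, its $I$--coordinate is automatically within $\epsilon/2$ of the level-preserving one, so closeness in $X\times I$ comes for free. CHET (with $\delta$ chosen against $\min\{\eta/2,\epsilon/2\}$, where $\eta$ separates the images at the levels $D\times t_k$) is then used to pin each piece down to $f_i$ on $D\times\partial J_k$ without destroying disjointness, and the pieces glue along the partition levels. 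Your constant-bookkeeping instincts (separation $\eta$, successive small moves) are the right ones, but they are deployed in a one-slab argument where the quantity you actually need to bound --- the interior $I$--displacement --- is not small to begin with.
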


\begin{proof}
Assume $X$ has DCP. Use $\widetilde{\rho}$ to denote the obvious sum   metric on $X \times I$.
Let $f_1,f_2\co D \times I \to X \times I$ be level-preserving
maps, and let $\epsilon
> 0$. Choose
$$0=t_0 < t_1 < \cdots < t_m=1$$
such that $t_k-t_{k-1}
< \epsilon/2$ for $k=1,\ldots,m$.
Define
$$J_k =[t_k, t_{k-1}],\qquad
f_i[k] = f_i|_{D \times J_k}.$$
Since $X$ has DAP then
applying \fullref{HET} (near $\proj_X(f_1(D \times I) \cup
f_2(D \times I)))$
we may assume that $f_1(D \times t_k) \cap
f_2(D \times t_k) = \emptyset$. Choose $\eta > 0$ so that
$$\widetilde{\rho}(f_1(D \times t_k), f_2(D \times t_k)) > \eta$$
for
$k=1,\ldots,n$. Let $\delta
> 0$ satisfy CHET for $X \times I$, for a small compact neighborhood $C$ of
$f_1(D \times I) \cup f_2(D \times I)$ and
$\min\{\eta/2,\epsilon/2\}$. Then by DCP,
applied to the intervals $J_k$, there are maps
$g_i[k]\co D\times J_k \to X\times J_k$ satisfying
\begin{enumerate}
\item $g_i[k](D \times J_k) \subset C$,
\item $\rho (\proj_X f_i[k], \proj_X g_i[k]) < \delta$,
\item $g_i[k]|_{D \times \partial J_k}$ is level preserving, and
\item $g_1[k](D \times J_k) \cap g_2[k](D \times J_k) = \emptyset$.
\end{enumerate}
By CHET and the choice of $\delta$ there
are maps $G_i[k]\co D\times J_k \to X\times J_k$ satisfying
\begin{enumerate}
\item $\rho (\proj_X f_i[k], \proj_X G_i[k]) < \epsilon/2$,
\item $G_i[k]|_{D \times \partial J_k} = f_i[k]|_{D \times \partial J_k}$, and
\item $G_1[k](D \times J_k) \cap G_2[k](D \times J_k) = \emptyset$.
\end{enumerate}
Set $G_i = \bigcup_k G_i[k]$. Confirming that $G_1$ and $G_2$ are the desired
maps is straightforward.

The reverse direction is trivial.  It merely requires treating any pair of
path homotopies $D \times I \to X$ as level-preserving maps
$D \times I \to X \times I.$
\end{proof}

\begin{defn}
A metric space ($X,\rho$) satisfies the \emph{Disjoint 1--Complex
Concordances Property (DCP*)} if, for any two  homotopies $f_i\co K_i
\times I \to X$, for $i=1,2$, where $K_i$ is a finite 1--complex, and
any $\epsilon > 0$ there exist concordances $F_i\co  K_i \times I \to
X \times I$ such that
$$F_1(K_1 \times I) \cap F_2(K_2 \times I) = \emptyset,$$
$F_i(K_i \times e) \subset X \times e$ for $e \in \partial I$, and $\rho
(f_i, \proj_X F_i) < \epsilon$.
\end{defn}

\begin{prop}
Suppose $X$ is a locally compact, metrizeable ANR with DAP.  Then $X$ has DCP
if and only if $X$ has DCP*. \label{prop-three}
\end{prop}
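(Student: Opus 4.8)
That DCP* implies DCP is immediate: a path homotopy $D\times I\to X$ is a homotopy $K\times I\to X$ with $K=D$, so DCP* applied with $K_1=K_2=D$ is precisely DCP. The plan for the converse is to derive DCP* from DCP by an induction over the $2$--cells of $K_1\times I$ and $K_2\times I$, treating one pair of $2$--cells at a time with DCP (for arcs), splicing with \fullref{HET}, and using DAP to position the lower skeleta.

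Given homotopies $f_i\colon K_i\times I\to X$ and $\epsilon>0$, I would first normalize. Subdivide each $K_i$ so finely that $\diam f_i(e\times\{t\})<\epsilon'$ for every edge $e$ and every $t$ -- possible by uniform continuity, since each $f_i(\cdot,t)$ carries a tiny edge to a tiny set -- where $\epsilon'>0$ is a small constant (of the kind furnished by \fullref{HET}) to be pinned down at the end; after a further $\epsilon'$--homotopy from \fullref{HET} I may also assume that $f_i$ is independent of the edge coordinate on a neighborhood of $K_i^{(0)}\times I$. Then, iterating DAP over the finite vertex sets and invoking \fullref{HET}, perturb $f_i$ so that for every vertex $v$ of $K_1$ and $v'$ of $K_2$ the arcs $f_1(v\times I)$ and $f_2(v'\times I)$ have disjoint images in $X$; since disjoint images force $f_1(v,t)\neq f_2(v',t)$ for all $t$, the level--preserving point concordances $\Phi_v(t)=(f_1(v,t),t)$ and $\Phi_{v'}(t)=(f_2(v',t),t)$ then have disjoint images in $X\times I$, with positive separation. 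These $\Phi_v$ are to be the values of the final concordance on the vertex tracks, shared by all edges through $v$; fixing them once and for all is what will make the construction fit together continuously over $K_i\times I$.

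For the induction, enumerate the pairs $(a,b)$ with $a$ an edge of $K_1$ and $b$ an edge of $K_2$. Suppose we have concordances $F_i$ on $K_i\times I$ that restrict to $\Phi_v$ on each vertex track, that are within $\epsilon/2$ of $f_i$ in the $X$--coordinate, and that are disjoint over all pairs treated so far. Apply DCP to the path homotopies $\proj_X F_1|_{a\times I}$ and $\proj_X F_2|_{b\times I}$ (identifying $a$, $b$ with $D$) to get disjoint path concordances $G_1$ on $a\times I$ and $G_2$ on $b\times I$ whose $X$--projections are $\delta$--close to the given ones, $\delta$ tiny. Then use \fullref{HET} to produce new concordances that agree with $G_i$ over the core of the edge, agree with the old $F_i$ (hence with $\Phi_v$, $\Phi_{v'}$) near the vertex tracks, and interpolate over thin collars. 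With $\delta$ and the collar width small the new $F_i$ stay $\delta$--close to the old ones, so the positive separations obtained at earlier pairs survive and the accumulated $X$--error stays below $\epsilon$. Running through all the pairs produces the desired $F_1$, $F_2$.

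The real obstacle sits in that splice. DCP controls $\proj_X G_1$ but gives no control on the $I$--coordinate of $G_1$, so on a vertex track $\{v\}\times I$ the arc $G_1|_{\{v\}\times I}$ may differ wildly, in the $I$--direction, from the prescribed $\Phi_v$, and a homotopy dragging one onto the other through a collar could be forced to meet $F_2(b\times I)$ -- pushing an arc off a $2$--cell image is not something DAP provides. This is exactly where the two normalizations pay off: because $K_i$ has been subdivided so that $f_i$ is nearly constant along each edge, the cores fed to DCP are nearly constant in the edge direction, so $G_i$ may be taken nearly constant there as well; and running the whole argument through the level--preserving reformulation of \fullref{prop-two} -- subdivide the parameter $I$ into subintervals of length $<\epsilon'$ and apply DCP slab by slab into the slabs $X\times J_k$ -- lets one take every concordance produced to be $\epsilon'$--level preserving. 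Together these force $G_1|_{\{v\}\times I}$ to lie within $O(\epsilon')$ of $\Phi_v$ in $X\times I$, so the collar interpolation becomes an $O(\epsilon')$--homotopy, realized by \fullref{HET} with $O(\epsilon')$--small tracks -- too small to spoil the separations already in hand, once $\epsilon'$ is chosen small against the relevant constants from \fullref{HET} and the separations are monitored through the induction. Fitting $\epsilon'$, the various $\delta$'s, the collar widths, and the parameter subdivision into a single budget $<\epsilon$ is the bookkeeping that requires care; the topological content is nothing beyond DCP, DAP, and \fullref{HET}.
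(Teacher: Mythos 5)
Your reverse direction and your identification of where the difficulty lies are both on target, but the resolution you offer for ``the real obstacle'' does not close the gap: the order of quantifiers defeats it. In your scheme every edge concordance must be spliced so as to \emph{agree} with the prescribed vertex tracks $\Phi_v$, and that splice is a homotopy whose tracks cannot be smaller than the distance in $X\times I$ from $G_1|_{\{v\}\times I}$ to $\Phi_v$. You bound that distance by the level-preservation defect $\epsilon'$, a constant that must be fixed at the outset (it governs the subdivisions of $K_i$ and of the parameter interval). But every quantity this splice must respect --- the separation between $G_1(a\times I)$ and $G_2(b\times I)$ produced by DCP at the current pair, the separations at previously treated pairs, and the separation $\eta_0$ between vertex tracks produced by DAP (itself determined only \emph{after} the $\epsilon'$--fine subdivision has created the vertices) --- is merely positive: neither DCP nor DAP supplies any modulus of separation, so none of these can be bounded below in terms of $\epsilon'$. ``Choosing $\epsilon'$ small against the separations monitored through the induction'' is therefore circular. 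The same uncontrolled $I$--coordinate drift undermines the claim that earlier separations survive: replacing $F_1|_{a\times I}$ by a fresh DCP output moves its $I$--coordinate by an amount you can only cap at $O(\epsilon')$, while the earlier separations, which live in $X\times I$, may be far smaller.

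The paper's proof is organized precisely so that nothing is ever spliced onto fixed data. It places the pairs of maps $K_i\times I\to X\times I$ (level preserving on $K_i\times\partial I$) in a complete metric space $\mathcal{H}$ and shows, for each pair of cells $(\sigma_1,\sigma_2)$, that the set $\mathcal{O}(\sigma_1,\sigma_2)$ of pairs with $f_1(\sigma_1\times I)\cap f_2(\sigma_2\times I)=\emptyset$ is open and dense: density is exactly \fullref{prop-two} applied to the restrictions $f_i|_{\sigma_i\times I}$ (which controls the $I$--coordinate, since these restrictions are level preserving), followed by two applications of CHET to \emph{extend} --- not splice --- the result over $K_i\times I$ as a map merely close to $f_i$, with agreement required nowhere. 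Simultaneous disjointness over all pairs then comes from the Baire property of $\mathcal{H}$, which is the device that absorbs all the adaptive bookkeeping your induction attempts by hand; since each perturbation need only be small, not compatible with earlier choices, the circularity never arises. To salvage your argument you would have to abandon the fixed vertex tracks and allow each step to perturb the maps globally by an adaptively small amount --- at which point you have rederived the paper's density argument.
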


\begin{proof}
This argument is similar to the one showing the equivalence of the
DDP with approximability of maps defined on finite 2-complexes by
embedding \cite[Theorem 24.1]{dabook}, and also to another one showing
the equivalence of DHP (for paths) and a Disjoint Homotopies Property
for 1-complexes \cite[Theorem 2.9]{dh02}.  We supply the short proof
for completeness.

To show the forward direction, endow $X$ with a complete metric $\rho$.
Let $K_1$ and $K_2$ be finite 1--simplicial complexes, and define
$$\mathcal{H} = \bigl\{ (f_1,f_2) \in \Map(K_1 \times I, X \times I) \times
  \Map(K_2 \times I, X \times I) : f_i|_{D \times \partial I}
\text{ level-preserving} \bigr\}$$
with the uniform metric. Note that $\mathcal{H}$ is a complete
metric space and, therefore, a Baire space. For $\sigma_i \in
K_i$ , let
$$\mathcal{O}(\sigma_1 , \sigma_2 )=\bigl\{ (f_1,f_2) \in \mathcal{H} :
  f_1(\sigma_1 \times I) \cap f_2(\sigma_2 \times I) = \emptyset \bigr\}.$$
Clearly $\mathcal{O}(\sigma_1 , \sigma_2 )$ is open in
$\mathcal{H}$. To see that $\mathcal{O}(\sigma_1 , \sigma_2 )$ is
dense in $\mathcal{H}$, let $\epsilon > 0$. Choose $\delta > 0$ to
satisfy CHET for $X \times I$, for a small compact neighborhood
of $\bigcup_i f_i(\sigma_i \times I)$, and $\epsilon$. Then choose
$\eta > 0$ to satisfy CHET for $X, \proj_X \bigl(\bigcup_i f_i(\sigma_i
\times I)\bigr)$ and $\delta$. By \fullref{prop-two} there are
$\eta$--approximations $g_i$ to $f_i|_{\sigma_i \times I}$ that are
disjoint path concordances. First apply CHET to extend $g_i$ over
$(\sigma_i \times I) \cup (K_i \times \partial I)$ so that the new
$g_i$ is level preserving on $K_i \times \partial I$ and
$\delta$--close to $f_i|_{(\sigma_i \times I) \cup (K_i \times
\partial I)}$. Then apply CHET again to extend $g_i$ over $K_i
\times I$ so that $g_i$ is now $\epsilon$--close to $f_i$. Thus,
$\mathcal{O}(\sigma_1 , \sigma_2 )$ is dense in $\mathcal{H}$.
Since $\mathcal{H}$ is a Baire space,
$$\mathcal{O} =
  \underset{(\sigma_1,\sigma_2) \in K_1 \times K_2} \bigcap
  \mathcal{O}(\sigma_1, \sigma_2)$$
is dense in $\mathcal{H}$. Note
that if $(f_1,f_2) \in \mathcal{O}$ then $f_1(K_1 \times I) \cap
f_2(K_2\times I) = \emptyset$. Hence, $X$ has DCP*.

The other direction is trivial.
\end{proof}

\begin{defn}  A \emph{topography $\Upsilon$ on $D \times I$} consists
of the following elements:
\begin{enumerate}
\item A set $\{ J_1, \ldots ,J_m \}$ of consecutive intervals in
  $\mathbb{R}$ such that $J_j = [t_{j-1},t_j]$
  where $t_0 < t_1 < \cdots < t_m$.
\item A finite set $\{ L_0, \ldots ,L_m \}$ of complexes embedded
  in $D \times I$ of dimension at most one. These complexes are called the
  \emph{transition levels}.
\item A finite set $\{ K_1, \ldots ,K_m \}$ of 1--complexes.  These
  complexes are called the \emph{level factors}.
\item A set $\bigl\{ \phi_j\co K_j \times J_j \to
  D \times I \bigr\}_{j=1,\ldots,m}$ of maps
  which satisfy the following conditions:
  \begin{enumerate}
  \item $L_0 = \phi_1(K_1 \times \{ t_0 \})$
  \item $L_m = \phi_m(K_m \times \{ t_m \})$
  \item $L_j = \phi_j(K_j \times \{ t_j \}) \cup \phi_{j+1}
    (K_{j+1} \times \{ t_j \})$ for $j=1,\ldots,m{-}1$
  \item $\phi_j|_{K_j \times \text{int }J_j}$ is an
    embedding for each $j=1,\ldots,m$
  \item $\bigcup_{j=1}^m \text{im } (\phi_j) = D \times I$
  \end{enumerate}
\end{enumerate}
\end{defn}

It is shown by Halverson \cite{dh02} that a p.l. general position
approximation of the projection of a map $f\co D \times I \to X
\times \mathbb{R}$ to the $\mathbb{R}$ factor induces a
topographical structure on the domain $D \times I$ of $f$.

\begin{defn}
A map $f\co D \times I \to X \times \mathbb{R}$ is a
\emph{topographical map} if there is a topography, $\Upsilon$, on
$D \times I$ which is level preserving in the sense that for each
map $\phi_j\co K_j \times J_j \to D \times I$ of the topography, $f
\circ \phi_j(K_j \times t) \subset X \times t$ for all $t \in
J_j$.
\end{defn}

Note that a topographical structure on $D \times I$ is in no way related
to the product structure of $D \times I$.  The proof of the following
lemma is provided in \cite[Theorem 3.3]{dh02}.

\begin{lem}
Every map $f\co D \times I \to X \times \mathbb{R}$ can be
approximated by a topographical map $g\co D \times I \to X \times
\mathbb{R}$. \label{TOP}
\end{lem}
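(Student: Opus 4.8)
The plan is to extract the topography directly from a general-position piecewise-linear approximation of the $\mathbb{R}$--coordinate of $f$, leaving the $X$--coordinate untouched. Write $f=(f_X,f_{\mathbb{R}})$ with $f_X=\proj_X\circ f\co D\times I\to X$ and $f_{\mathbb{R}}=\proj_{\mathbb{R}}\circ f\co D\times I\to\mathbb{R}$. Since $D\times I$ is a compact $2$--dimensional polyhedron and $\mathbb{R}$ is PL, I would first approximate $f_{\mathbb{R}}$, arbitrarily closely, by a PL map $\rho\co D\times I\to\mathbb{R}$ that is simplicial for some triangulations of $D\times I$ and of its image interval $[a,b]=\rho(D\times I)$ and that is in general position, so that no $2$--simplex is collapsed to a point and hence $\dim\rho^{-1}(t)\le 1$ for every $t$. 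Then I would set $g=(f_X,\rho)\co D\times I\to X\times\mathbb{R}$. This $g$ is as close to $f$ as desired, so the whole task reduces to producing a topography on $D\times I$ that is level preserving for $g$, i.e.\ one all of whose structure maps $\phi_j$ satisfy $\rho\circ\phi_j(x,t)=t$.

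To build it, enlarge and subdivide the triangulations so that the vertices $t_0=a<t_1<\cdots<t_m=b$ of $[a,b]$ include every critical value of $\rho$ --- the image of a vertex of $D\times I$ --- and satisfy $t_j-t_{j-1}<\epsilon$; set $J_j=[t_{j-1},t_j]$, $L_j=\rho^{-1}(t_j)$, and $K_j=\rho^{-1}(s_j)$ for an interior point $s_j$ of $J_j$. By the general-position assumption, each $L_j$ and each $K_j$ is a subcomplex of $D\times I$ of dimension at most one. Over the interior of $J_j$ the simplicial map $\rho$ is a trivial PL bundle with fibre $K_j$; standard PL arguments then furnish a PL map $\phi_j\co K_j\times J_j\to\overline{\rho^{-1}(\mathrm{int}\,J_j)}$ which restricts over $\mathrm{int}\,J_j$ to (the inverse of) such a trivialization --- in particular $\rho\circ\phi_j(x,t)=t$ and $\phi_j|_{K_j\times\mathrm{int}\,J_j}$ is an embedding --- and which collapses $K_j\times\{t_{j-1}\}$ and $K_j\times\{t_j\}$ onto the parts of $L_{j-1}$ and $L_j$ approachable from inside $\mathrm{int}\,J_j$. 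The topography axioms then follow quickly. General position forces $\rho$ to take values unequal to $\rho(y)$, hence values larger or smaller than $\rho(y)$, near every $y\in D\times I$; thus each $y$ lies in some $\overline{\rho^{-1}(\mathrm{int}\,J_j)}=\im\phi_j$, so $\bigcup_j\im\phi_j=D\times I$; and the same observation applied at a single level $t_j$ gives $L_j=\phi_j(K_j\times\{t_j\})\cup\phi_{j+1}(K_{j+1}\times\{t_j\})$, the end cases $L_0=\phi_1(K_1\times\{t_0\})$ and $L_m=\phi_m(K_m\times\{t_m\})$ being handled the same way. Finally, $g$ is level preserving for this topography, since $\proj_{\mathbb{R}}\bigl(g(\phi_j(K_j\times t))\bigr)=\rho(\phi_j(K_j\times t))=\{t\}$ and hence $g\circ\phi_j(K_j\times t)\subset X\times t$.

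The crux is the PL fact invoked above: that a generic simplicial map of the square onto an interval restricts, over each open subinterval between consecutive critical values, to a trivial PL bundle with $1$--complex fibre, and that this trivialization extends continuously --- indeed piecewise-linearly --- across each endpoint, collapsing onto the appropriate subcomplex of the neighbouring level set. This is standard but delicate PL topology --- essentially the local picture of PL Morse functions on a $2$--manifold --- and the care it demands is in choosing compatible triangulations (the target vertices should include all critical values, and the cell structure on each $\rho^{-1}(J_j)$ should refine the product structure over $\mathrm{int}\,J_j$) and in checking that the collapses at the two ends mesh with that product structure. Granting it, the remaining steps --- the closeness estimate and the verification of the five topography axioms --- are routine, as sketched above.
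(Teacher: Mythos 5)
Your argument is correct and is essentially the paper's own approach: the paper notes that ``a p.l.\ general position approximation of the projection of $f$ to the $\mathbb{R}$ factor induces a topographical structure'' and then defers the details to Halverson \cite[Theorem 3.3]{dh02}, which proceeds exactly as you do, reading the level factors and transition levels off the preimages of regular and vertex values and using the mapping-cylinder structure of a simplicial map over each subinterval to produce the $\phi_j$. Nothing further is needed.
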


\begin{thm}[Disjoint Concordances Theorem]
\label{thm-dct}
Suppose $X$ is a locally compact, metric ANR with DAP.  Then $X$ has DCP
if and only if $X \times \mathbb{R}$ has DDP.
\end{thm}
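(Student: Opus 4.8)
\emph{Proof proposal.}
The plan is to prove the two implications separately. The reverse implication---DDP for $X \times \mathbb{R}$ forcing DCP for $X$---is comparatively short and direct, so the real content is the forward one. For that direction the strategy, following Halverson's philosophy, is to reduce an arbitrary pair of maps $D \times I \to X \times \mathbb{R}$ to a pair of finite \emph{stacks} of level-preserving maps of $1$--complexes (via \fullref{TOP}), to separate the individual ``floors'' of the two stacks by invoking DCP in the form of DCP* (via \fullref{prop-three}), and then to splice the separated floors back together. I expect the splicing at the transition levels to be the main obstacle, since DCP* alters the end faces of each floor.

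For the forward direction, begin with $f_1, f_2 \colon D \times I \to X \times \mathbb{R}$ and $\epsilon > 0$. Apply \fullref{TOP} to replace each $f_i$ by a topographical approximation $g_i$ carrying a topography $\Upsilon_i$, and arrange---by approximating the $\mathbb{R}$--projections $\proj_{\mathbb{R}} g_1$ and $\proj_{\mathbb{R}} g_2$ compatibly, or simply by refining the $\Upsilon_i$ afterward---that the two topographies share one set of cut-points $t_0 < t_1 < \cdots < t_m$. Over each slab $[t_{k-1},t_k]$ the composite of $g_i$ with the relevant topography map is a level-preserving map $P^i_k \times [t_{k-1},t_k] \to X \times [t_{k-1},t_k]$ of a finite $1$--complex $P^i_k$, that is, a concordance of $P^i_k$; and the preimage $g_i^{-1}(X \times \{t_k\})$ is the transition level $L^i_k$, an embedded $1$--complex carried by $g_i$ into $X \times \{t_k\}$. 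Using the DAP of $X$ (extended to maps of $1$--complexes by the Baire-category argument already used in \fullref{prop-three}), together with small level-preserving adjustments confined to thin collars of the finitely many levels $X \times \{t_k\}$, I may assume $g_1(L^1_k) \cap g_2(L^2_k) = \emptyset$ for every $k$; write $\beta > 0$ for a common lower bound on these gaps. From now on the transition-level data of $g_1$ and $g_2$ are fixed and disjoint.

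Next fix $\delta > 0$ small relative to $\beta$, to $\epsilon$, and to a uniform continuity modulus for $g_1, g_2$, and treat each slab independently. On slab $k$ apply DCP* to the pair of $1$--complex homotopies that $g_1$ and $g_2$ determine there, obtaining disjoint concordances whose $X$--projections are $\delta$--close to those of $g_1, g_2$; then apply \fullref{HET} in thin collars of $X \times \{t_{k-1}\}$ and $X \times \{t_k\}$ to pull the two end faces of the modified slab back onto the fixed (hence disjoint) transition-level images of the $g_i$. Each such correction has size a bounded multiple of $\delta$, so that the $\beta$--separation of the transition slices is preserved on the collars; and since all end faces now agree with the original $g_i$ along the transition levels, the modified slabs splice to well-defined maps $\hat g_1, \hat g_2 \colon D \times I \to X \times \mathbb{R}$. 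These are $\epsilon$--close to $f_1, f_2$ and have disjoint slices at every height---at a transition level because the slices are unchanged, and inside a slab because DCP* made them so---whence $\hat g_1(D \times I) \cap \hat g_2(D \times I) = \emptyset$ and $X \times \mathbb{R}$ has DDP. (The collar step can be avoided by first establishing a ``rel $D \times \partial I$'' strengthening of DCP*---run the Baire argument of \fullref{prop-three} with the end-face data held fixed, which is legitimate by the ``moreover'' clause of \fullref{prop-two}---and feeding the fixed end faces straight into the slab-wise application.) The reason this transition-level bookkeeping is the crux is that DCP* separates the data on a single slab cleanly but perturbs that slab's end faces, so the independently separated slabs need not match; controlling this is exactly why one first general-positions the transition slices and then chooses the slab-wise tolerance $\delta$ small enough that no correction can reintroduce an intersection.

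For the converse, suppose $X \times \mathbb{R}$ has DDP and let $f_1, f_2 \colon D \times I \to X$ be path homotopies with $\epsilon > 0$. First use the DAP to make $f_1(D \times \partial I)$ and $f_2(D \times \partial I)$ disjoint, say with gap $\beta > 0$. The maps $F^0_i(d,s) = (f_i(d,s), s)$ are path concordances with $\proj_X F^0_i = f_i$; viewing them as maps $D \times I \to X \times \mathbb{R}$, apply DDP to obtain disjoint maps $F_i$ that are $\delta$--close to $F^0_i$, use a small \fullref{HET} homotopy supported near $D \times \partial I$ to move their end faces into $X \times \{0\}$ and $X \times \{1\}$, and finally compose with the retraction $X \times \mathbb{R} \to X \times I$. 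With $\delta$ chosen small relative to $\beta$, $\epsilon$, and a continuity modulus for the $f_i$, the only place disjointness could be lost is where the retraction folds the heights $0$ and $1$, and the $\beta$--separation of $f_1(D \times \partial I)$ from $f_2(D \times \partial I)$ rules this out; so the resulting path concordances are disjoint and $\epsilon$--approximate the $f_i$ in the $X$--coordinate, giving DCP for $X$.
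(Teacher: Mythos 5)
Your proposal follows essentially the same route as the paper's proof: for ($\Leftarrow$) you view the path homotopies as level-preserving maps, separate their ends with DAP, and invoke DDP; for ($\Rightarrow$) you use the topographical approximation of \fullref{TOP} with a common set of cut levels, put the transition levels in general position via DAP and CHET, apply DCP* slab by slab, and splice the slabs back together with CHET. The additional bookkeeping you supply at the transition levels (and your ``rel boundary'' alternative via the moreover clause of \fullref{prop-two}) only makes explicit estimates that the paper's own argument leaves implicit, so the approach is the same.
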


\begin{proof}
($\Leftarrow$)  Given two path homotopies $f_1,f_2\co D\times I
\to X$,
treat them as level-preserving maps $f_1,f_2\co D\times I
\to X \times I$. Applying DAP, and using CHET as before, we may assume
without loss of generality that
$f_1(D \times \partial I) \cap f_2(D \times \partial I) = \emptyset$.
Since $X \times (0,1)$ has DDP,
each $f_i$ can be approximated, fixing the actions on
$D \times \{0,1\}$, by an $\epsilon$--close map $g_i$, such that
the images of $g_1,g_2$ are disjoint.  The DCP follows.

($\Rightarrow$) Given maps $f_1,f_2\co I^2 = D \times I \to X \times
\mathbb{R}$, by \fullref{TOP} we may assume that each $f_i$ is a
topographical map with topography $\Upsilon[i]$. An object $O$ in the
definition of $\Upsilon[i]$ will be denoted as $O[i]$. Note that we also
may assume the following:
\begin{enumerate}
\item The set of intervals $\{ J_j[i] \}$ are the same for $i=1,2$.
This follows from subdividing the intervals appropriately as
outlined in \cite[Theorem 3.4]{dh02}.
\item $f_1(\bigcup L_j[1]) \cap f_2(\bigcup L_j[2]) = \emptyset$.
This follows from DAP and CHET.
\end{enumerate}
By DCP* there exist maps $\psi_j[i]\co K_j[i] \times J_j$, approximating
$\phi_j f_i$, such that
$$\im\psi_j[1] \cap \im\psi_j[2] = \emptyset.$$
Applying \fullref{HET}, we may assume that
$$\psi_j[i]|_{K_j[i] \times \partial J_j}=\phi_j
f_i|_{K_j[i] \times \partial J_j}.$$
(Recall that $\phi_j
f_i(K_j[i] \times \partial J_j) \subset (L_j \cup L_{j-1})$.)  Set
$g_i = \bigcup_j \psi_j[i]$. Then $g_1$ and $g_2$ are
the desired disjoint approximations of $f_1$ and $f_2$.
\end{proof}

\begin{cor}
\label{cor-three-five}
An ENR homology $n$--manifold $X$, for $n \ge 4$, has DCP
if and only if $X \times \mathbb{R}$ has DDP.
\end{cor}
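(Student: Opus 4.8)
The plan is to obtain \fullref{cor-three-five} as a direct application of the Disjoint Concordances Theorem (\fullref{thm-dct}); the only work is to confirm that an ENR homology $n$--manifold $X$ with $n \ge 4$ meets the three standing hypotheses of that theorem, namely that $X$ is locally compact, that $X$ is a metric ANR, and that $X$ has DAP.

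First I would recall the two facts already assembled in Section 2. By definition an ENR homology $n$--manifold is a homology manifold homeomorphic to a retract of an open subset of some Euclidean space; equivalently, it is a homology manifold which is locally compact, metrizable, finite-dimensional, and locally contractible. A finite-dimensional, locally contractible, locally compact metric space is an ANR, so $X$ is a locally compact metric ANR, dispatching the first two hypotheses. Second, the preliminaries record that every ENR homology $k$--manifold with $k \ge 3$ has the Disjoint Arcs Property; since $n \ge 4 \ge 3$, our $X$ has DAP, dispatching the third hypothesis.

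With all hypotheses of \fullref{thm-dct} in force, that theorem yields that $X$ has DCP if and only if $X \times \mathbb{R}$ has DDP, which is exactly the assertion of the corollary. There is essentially no obstacle here: the mathematical content resides entirely in \fullref{thm-dct} and in the structure theory of ENR homology manifolds, both already available. The one step worth a remark is the passage from ``finite-dimensional, locally contractible, locally compact metric'' to ``ANR'', which is the classical characterization of ANRs among such spaces and is the standard reason ENR homology manifolds are ANRs; once this is invoked, the proof is complete.
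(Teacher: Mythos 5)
Your proposal is correct and matches the paper's (implicit) reasoning exactly: the corollary is stated as an immediate consequence of \fullref{thm-dct}, with the only content being that an ENR homology $n$--manifold is a locally compact metric ANR (finite-dimensional, locally contractible, locally compact metric spaces are ANRs) and has DAP for $n\ge 3$, both recorded in the preliminaries. No further comment is needed.
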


When $n=3$, \fullref{cor-three-five} is formally but vacuously true:
since no ENR homology 4--manifold has DDP,
no such homology 3--manifold can have DCP.

\begin{cor}
Let $X$ be an ENR homology $n$--manifold, $n \ge 4$.  Then
$X \times \mathbb{R}$ is a manifold
if and only if $X$ has trivial Quinn index and
satisfies DCP.
\end{cor}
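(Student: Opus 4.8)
The plan is to combine the Edwards–Quinn resolution theorem with the Disjoint Concordances Theorem (\fullref{thm-dct}) just established, treating separately the two implications of the asserted equivalence. Throughout, note that an ENR homology $n$--manifold is in particular a locally compact, finite-dimensional, locally contractible metric ANR, and for $n \ge 3$ it has the DAP; so the hypotheses of \fullref{thm-dct} and \fullref{cor-three-five} are automatically in force, and I may freely pass between ``$X$ has DCP'' and ``$X \times \mathbb{R}$ has DDP''. Also observe that $X \times \mathbb{R}$ is an ENR homology $(n{+}1)$--manifold, and it is resolvable precisely when $X$ is, since a cell-like resolution $M \to X$ yields the cell-like resolution $M \times \mathbb{R} \to X \times \mathbb{R}$, and conversely by Quinn's index theory the index is multiplicative (equivalently, $i(X \times \mathbb{R}) = i(X)$).

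For the forward direction, suppose $X \times \mathbb{R}$ is a manifold. Since a manifold of dimension $\ge 3$ has the DDP, \fullref{cor-three-five} immediately gives that $X$ has DCP. It remains to see that $X$ has trivial Quinn index. But $X \times \mathbb{R}$, being a manifold, is certainly resolvable (the identity is a cell-like resolution), so $i(X \times \mathbb{R}) = 1$; by multiplicativity of the index, $i(X) = 1$ as well, which is the statement that $X$ has trivial Quinn index. (Alternatively, a cell-like resolution of $X \times \mathbb{R}$ restricts appropriately, or one invokes directly that a Cartesian factor of a manifold is resolvable.)

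For the reverse direction, suppose $X$ has trivial Quinn index and satisfies DCP. By \fullref{cor-three-five}, $X \times \mathbb{R}$ has the DDP. Since $X$ has trivial Quinn index it is resolvable, hence so is $X \times \mathbb{R}$, which is an ENR homology $(n{+}1)$--manifold with $n + 1 \ge 5$. Now invoke the Edwards characterization cited in the introduction: a resolvable ENR homology manifold of dimension $\ge 5$ with the DDP is a topological manifold \cite{edw} \cite[Theorem 24.3]{dabook}. Therefore $X \times \mathbb{R}$ is a manifold, completing the proof.

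The only genuinely non-formal point is the multiplicativity (or more precisely the invariance under crossing with $\mathbb{R}$) of the Quinn index, i.e.\ that $X$ is resolvable iff $X \times \mathbb{R}$ is; this is where I expect to lean hardest on the literature, citing Quinn \cite{Q}. Everything else is bookkeeping: checking that $X \times \mathbb{R}$ meets the dimension and ENR hypotheses of Edwards' theorem, and that the DAP hypothesis of \fullref{thm-dct}/\fullref{cor-three-five} holds for ENR homology manifolds so that the equivalence ``DCP $\Leftrightarrow$ $(X \times \mathbb{R})$ has DDP'' may be applied in both directions.
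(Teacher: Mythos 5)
Your proof is correct and follows essentially the same route as the paper: \fullref{thm-dct} (via \fullref{cor-three-five}) converts DCP into the DDP for $X \times \mathbb{R}$, Quinn's invariance $i(X \times \mathbb{R}) = i(X)$ transfers resolvability between the two spaces, and the Edwards--Quinn characterization finishes the reverse direction. One small slip: a manifold of dimension $\ge 3$ need not have the DDP (the correct threshold is $\ge 5$; two $2$--disks in a $4$--manifold generically meet in points), but since $\dim(X \times \mathbb{R}) = n+1 \ge 5$ the step you need is still valid.
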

\fullref{thm-dct} demonstrates the equivalence in $X \times
\mathbb{R}$ of DCP and DDP.
By the standard combination of the work of Edwards \cite{edw} and of
Quinn \cite{Q}, $X \times \mathbb{R}$ is a manifold if and only if it
has both this latter property and trivial Quinn index.  Furthermore,
by \cite{Q}, $i(X \times \mathbb{R}) =  i(X)$, without regard to any of
these general position properties.

\section{Questions}
(1)\qua Is every finite-dimensional Busemann space (see
Thurston \cite{pt}) $X$ necessarily a manifold?
Actually, there are two unsettled questions here: is $i(X)$, the Quinn index, trivial?
When $\dim X > 4$, must $X$ have DDP? It may be of interest to add that
$X$ is known to be a manifold if $\dim X \le 4$ \cite{pt}.

The same pair of concerns crops up in the following setting.

(2)\qua  Suppose, for any two points $p,q$ of the compact,
finite-dimensional metric space $X$, there is a homeomorphism
from the suspension of a space $Y$ onto $X$ that carries the suspension
points onto $p,q$.  Is $X$ a manifold?

(3)\qua  Given an ENR homology $n$--manifold $X$, for $n \ge 4$, can maps
$f,g\co I^2 \to X$ be approximated by $F,G\co I^2 \to X$ for which there
exists a 0--dimensional $F_\sigma$ set $T \subset I^2$ such that
$$F(I^2 - T) \cap G(I^2 - T) = \emptyset?$$
If so, $X \times \mathbb{R}$ will satisfy the DDP.  Recall that $X
\times \mathbb{R}$ satisfies the DDP when $S(X)$, the singular set
of $X$, is at most $(n{-}2)$--dimensional. A key to the argument is
that then one can obtain $F,G\co I^2 \to X$ and a compact,
0--dimensional $T$ with $F(I^2 - T) \cap G(I^2 - T) = \emptyset$.
Similar reasoning applies with $F_\sigma$--subsets $T$ in place of
compact subsets.  It may be worth noting that, because $X$ does
satisfy the DAP, one can easily obtain (maps $F,G$ and) such a $T$
which is a 0--dimensional $G_\delta$--subset of $I^2,$ but the
argument requires a 0--dimensional $F_\sigma$--subset, which is more
``meager''.


\bibliographystyle{gtart}
\bibliography{link}

\end{document}